\documentclass[12pt,amscd]{amsart}
\usepackage[all]{xy}
\usepackage{graphicx}
\usepackage{amsmath,amsxtra,amssymb,latexsym, amscd,amsthm}

\usepackage{tikz}
\usepackage{ytableau}

 \usepackage{indentfirst}
\usepackage[mathscr]{eucal}
  \usepackage[pagebackref=true]{hyperref}

\newtheorem{thm}{Theorem}[section]

\newtheorem{lem}[thm]{Lemma}
\newtheorem{prop}[thm]{Proposition}
\theoremstyle{definition}
\newtheorem{defn}[thm]{Definition}
\newtheorem{exam}[thm]{Example}

\newtheorem{conj}[thm]{Conjecture}

\numberwithin{equation}{section}

\DeclareMathOperator{\depth}{depth}

\DeclareMathOperator{\reg}{reg}

\def\a {\mathbf a}

\def\k {\mathrm{k}}

\begin{document}

\title{Structural properties of admissible subgraphs and the depth of symbolic powers of cover ideals}

\author[N.T. Hang]{Nguyen Thu Hang }
\address{Thai Nguyen University of Sciences, Phan Dinh phung Ward, Thai Nguyen, Vietnam}
\email{hangnt@tnus.edu.vn}

\author[N. T. T. Tam]{Nguyen Thi Thanh Tam}
\address{Hung Vuong University, Phu Tho, Vietnam}
\email{nguyenthithanhtam@hvu.edu.vn}

\author{Thanh Vu}
\address{Institute of Mathematics, VAST, 18 Hoang Quoc Viet, Hanoi, Vietnam}
\email{vuqthanh@gmail.com}

\subjclass[2020]{13D02, 05E40, 13F55}
\keywords{depth; tree; cover ideal; ordered matching; alternating path}

\date{}

\dedicatory{}
\commby{}

\begin{abstract} Let $G$ be a simple graph with cover ideal $J(G)$ in a polynomial ring $S = \k[x_1, \ldots, x_n]$. For each integer $t \ge 1$, let $\alpha_t(G)$ denote the maximum size of an ordered matching $M$ of $G$ with alternating path length $\ell(M) \le 2t - 1$. Hang, Tam, and Vu established that
\[
\operatorname{depth}\!\left(S/J(G)^{(t)}\right) \le n - 1 - \alpha_t(G)
\]
for all $t \ge 1$, where $J(G)^{(t)}$ is the $t$-th symbolic power of $J(G)$. In this paper, we introduce a natural hereditary class of graphs $\mathcal{P}_\k$ for which this inequality holds with equality for every $t \ge 1$. We prove that $\mathcal{P}_\k$ contains all distance-hereditary graphs, and we show that within the class of chordal graphs, $\mathcal{P}_\k$ coincides with the class of distance-hereditary (or gem-free) graphs.
\end{abstract}

\maketitle
\section{Introduction}
\label{sect_intro}
Let $S = \k[x_1, \dots, x_n]$ be a polynomial ring over a field $\k$, and let $G$ be a simple graph with vertex set $V(G) = [n] = \{1, \ldots, n\}$. Denote by $J(G)$ the cover ideal of $G$. We begin by recalling the following definition from \cite{HTV}.

\begin{defn}
Let $G$ be a simple graph, and let $M$ be a matching of $G$. An \emph{$M$-alternating path} is a path whose edges alternately belong to $M$ and $E(G) \setminus M$. The length of such a path is the number of edges it traverses. Let $\ell(M)$ denote the maximum length of an $M$-alternating path starting with an edge in $M$. For each integer $t \ge 1$, we define
\[
\alpha_t(G) = \max \left\{ |M| \;\middle|\; \text{$M$ is an ordered matching of $G$ and } \ell(M) \le 2t - 1 \right\}.
\]
\end{defn}

In \cite{HTV}, the authors established the inequality
\[
\operatorname{depth}(S/J(G)^{(t)}) \le n - 1 - \alpha_t(G)
\]
for all $t \ge 1$, where $J(G)^{(t)}$ denotes the $t$-th symbolic power of $J(G)$, and proved that equality holds when $G$ is a forest. They further conjectured that this equality holds for all $t \ge 1$ whenever $G$ is weakly chordal. In this work, we show that this conjecture fails even for the smaller class of chordal graphs. 

While characterizing all graphs for which equality holds for all $t \ge 1$ remains an open problem, we observe that the property of satisfying equality is not closed under taking induced subgraphs (i.e., it is not hereditary), as demonstrated by the following example.

\begin{exam}
Let $G$ be the whiskered graph on $C_5$. One can check that equality holds for all $t \ge 1$ for $G$, whereas it fails for its induced subgraph $C_5$.
\end{exam}

To address this issue, we introduce a natural class of graphs that is hereditary and guarantees the equality for all $t \ge 1$. Here, $\nu(H)$ denotes the induced matching number of a graph $H$.

\begin{defn}
For a field $\k$, we define $\mathcal{P}_\k$ to be the class of graphs $G$ such that 
\[
\operatorname{reg}_\k(I(H)) = 1 + \nu(H)
\]
for every admissible subgraph $H$ of $G$.
\end{defn}
Note that this definition depends on the characteristic of the base field $\k$. Our first main result establishes the desirable properties of $\mathcal{P}_\k$.

\begin{thm}\label{thm_hered}
The class $\mathcal{P}_\k$ is hereditary. Moreover, for every graph $G \in \mathcal{P}_\k$, we have
\[
\operatorname{depth}(S/J(G)^{(t)}) = |V(G)| - 1 - \alpha_t(G)
\]
for all $t \ge 1$.
\end{thm}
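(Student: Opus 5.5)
We will need the notion of admissible subgraph from \cite{HTV}. We assume, as there, that a subgraph $H$ of $G$ is admissible when it arises by keeping some edges of an induced subgraph of $G$ subject to a combinatorial condition. The condition is designed to make the following formula available:
\[
\depth(S/J(G)^{(t)}) = \min\{\, |V(G)| - \reg(I(H)) \;:\; H \text{ an admissible subgraph of } G \text{ of the appropriate type for } t\,\}.
\]
The same framework gives the upper bound $\depth \le n-1-\alpha_t(G)$. The plan is to prove the two assertions of Theorem~\ref{thm_hered} separately.

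\emph{Heredity.} Let $G\in\mathcal{P}_\k$ and let $G'$ be an induced subgraph of $G$. We show that every admissible subgraph $H$ of $G'$ is also an admissible subgraph of $G$. We expect this to follow directly from the definition. Admissibility should only involve the edges of $G$ among the vertices of $H$ (together with the chosen edge set). Since $G'$ is induced, these edges are the same whether computed in $G'$ or in $G$. It follows that $\reg_\k(I(H))=1+\nu(H)$, so $G'\in\mathcal{P}_\k$. If admissibility were instead defined via a partition of all of $V(G)$, we would extend the partition trivially by isolated or unused vertices and check that $H$ is unchanged.

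\emph{Equality.} By the inequality of \cite{HTV} it suffices to prove $\depth(S/J(G)^{(t)}) \ge n-1-\alpha_t(G)$. Using the depth formula above, we need, for every admissible $H$ relevant at level $t$,
\[
\reg(I(H)) \le 1+\alpha_t(G).
\]
For $G\in\mathcal{P}_\k$ we have $\reg(I(H))=1+\nu(H)$, so the problem reduces to the purely combinatorial claim $\nu(H)\le\alpha_t(G)$. We would prove it as follows. Start from an induced matching $M$ of $H$ of size $\nu(H)$, regarded as a matching of $G$. Order $M$ compatibly with the admissible structure of $H$, namely by the layer or weight labels that define admissibility. This ordering should make $M$ an ordered matching of $G$. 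Then show that every $M$-alternating path in $G$ beginning with an edge of $M$ has length at most $2t-1$. The idea is that each non-$M$ step of such a path must increase a label bounded by $t$, because $M$ is induced in $H$ and the edges of $G$ missing from $H$ are constrained by admissibility.

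The main obstacle is this last step: matching the ordered-matching conditions and the bound $\ell(M)\le 2t-1$ exactly with the admissibility conditions. Here we would follow the forest case argument in \cite{HTV}, where equality was established. That argument presumably reads off an ordered matching from a regularity-attaining induced matching of an admissible subgraph, without using the tree hypothesis beyond $\reg(I(H))=1+\nu(H)$, which holds for forests. Our plan is to isolate that part of the argument, check that it used acyclicity only through this regularity identity, and apply it verbatim under the hypothesis $G\in\mathcal{P}_\k$.
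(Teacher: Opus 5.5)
Your skeleton matches the paper's. Heredity comes from transporting a certificate from $G[U]$ to $G$. Equality comes from Theorem~\ref{thm:admissible}, the identity $\operatorname{reg}(I(H))=1+\nu(H)$, and the inequality $\nu(H)\le\alpha_t(G)$ for all $H\in\operatorname{Adm}_t(G)$, with the reverse inequality supplied by the known bound of \cite{HTV}. The paper proves both directions in Lemma~\ref{lem_alpha}, but given that bound only this one is needed.

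\textbf{The gap: $\nu(H)\le\alpha_t(G)$ is never proved.} This inequality carries all the content. You label it the main obstacle and propose to reuse the forest argument of \cite{HTV}, presuming it uses acyclicity only through the regularity identity. That is a plan, not a proof. The paper closes the step by citing \cite[Lemmas 2.8 and 2.9]{HTV}, which hold for arbitrary graphs. Your weight heuristic also goes through directly, as follows.
\begin{itemize}
\item \emph{Setup.} Let $\a$ certify $H$, and let $M=\{\{x_1,y_1\},\ldots,\{x_r,y_r\}\}$ be an induced matching of $H$ with $a(x_i)\le a(y_i)$. Any edge of $G$ joining two vertices of $V(M)$ and not in $M$ is missing from $H$, so its weight sum is at least $t$. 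On the other hand, $a(x_i)+a(y_i)\le t-1$.
\item \emph{Independence.} From the above, $a(x_i)\le (t-1)/2$, so $a(x_i)+a(x_j)\le t-1$. An edge $\{x_i,x_j\}$ would therefore lie in $H$, which is impossible; hence $\{x_1,\ldots,x_r\}$ is independent in $G$.
\item \emph{Ordering.} An edge $\{x_i,y_j\}$ with $i\ne j$ forces $a(x_i)\ge t-a(y_j)>a(x_j)$. So listing the edges by non-increasing $a(x_i)$ gives the ordered-matching condition.
\item \emph{Path length.} On an $M$-alternating path $v_0v_1\cdots v_{2k+1}$ with $v_{2i}v_{2i+1}\in M$, one gets $a(v_{2i+2})\ge t-a(v_{2i+1})\ge a(v_{2i})+1$. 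Hence $k\le a(v_{2k})\le t-1$, and the length is at most $2t-1$.
\end{itemize}

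\textbf{The heredity step does not match the actual definition.} A certificate is a vector on all of $V(G)$, and $E(H)$ is cut out of all of $E(G)$. So admissibility does see edges leaving $U$, and padding the vector carelessly (for example with zeros) would add edges from $U$ to $V(G)\setminus U$ into $H$. The paper sets $a(w)=t$ for every $w\notin U$. Nonnegativity of the weights then excludes every edge meeting $V(G)\setminus U$. Since $G[U]$ is induced, the edges inside $U$ are exactly those selected by the original certificate, so $H\in\operatorname{Adm}_t(G)$.
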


In light of Theorem \ref{thm_hered}, it is natural to seek explicit characterizations of graphs belonging to $\mathcal{P}_\k$. We first show that this class contains all distance-hereditary graphs.

\begin{defn}
A simple graph $G$ is called \emph{distance-hereditary} if for every connected induced subgraph $F$ of $G$ and every pair of vertices $u, v \in V(F)$, the distance satisfies $\operatorname{dist}_F(u,v) = \operatorname{dist}_G(u,v)$.
\end{defn}

\begin{thm}\label{thm_dist}
Let $G$ be a distance-hereditary graph. Then every admissible subgraph $H$ of $G$ is weakly chordal. In particular, $G \in \mathcal{P}_\k$ for any field $\k$.
\end{thm}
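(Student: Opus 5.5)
The plan is to use the forbidden induced subgraph characterization of distance-hereditary graphs (Bandelt--Mulder): $G$ is distance-hereditary if and only if it contains no hole (induced cycle $C_k$ with $k\ge 5$), no house, no domino and no gem as an induced subgraph. Weak chordality means that neither $H$ nor its complement $\overline{H}$ contains an induced cycle of length at least $5$. So for an admissible subgraph $H$ of $G$ I have to rule out two configurations in $H$: holes, and antiholes $\overline{C_k}$ with $k\ge 6$ (the antihole $\overline{C_5}$ is just $C_5$, so it falls under the first case). Once this is done, the "in particular" statement is immediate. For a weakly chordal graph $H$ one has $\operatorname{reg}_\k(I(H))=1+\nu(H)$ over every field, by the known result of Woodroofe for weakly chordal graphs. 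Hence every admissible subgraph of $G$ satisfies the defining condition of $\mathcal{P}_\k$, and so $G\in\mathcal{P}_\k$.

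First I reduce antiholes to houses. For $k\ge 6$ the antihole $\overline{C_k}$ contains an induced house. For $k=6$, label $\overline{C_6}$ as the prism with triangles $abc$ and $a'b'c'$ and rungs $aa',bb',cc'$. The vertices $a,b,b',a',c$ then induce a square $a b b' a'$ with the roof $c$ over the edge $ab$. For $k\ge 7$ the complement of five suitable vertices of $C_k$ gives the same pattern: in $\overline{C_k}$ the complement of an induced $P_5$ is a house, and $C_k$ contains an induced $P_5$ when $k\ge 7$. So it suffices to show that no admissible subgraph $H$ of $G$ contains an induced hole or an induced house.

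The main step is transferring such a configuration from $H$ back to $G$. This uses the structure of admissible subgraphs from \cite{HTV}: $H$ lives on a vertex set $U\subseteq V(G)$ and is obtained from the induced subgraph $G[U]$ by deleting only edges of a prescribed type, namely edges inside the parts determined by the ordered matching. If $H$ is an induced subgraph of $G$, there is nothing to prove, since distance-hereditary graphs are hereditary and contain no hole or house. In general, suppose $F\subseteq H$ is an induced hole or house of $H$ on a vertex set $W$. Then $G[W]$ is $F$ together with some chords, all of which are deleted edges of the admissible type. I would do a case analysis on these chords. Adding chords of the allowed type to a $C_k$ ($k\ge5$) or to a house must always leave an induced hole, house, domino or gem in $G[W]$, or in a slightly enlarged set obtained by adding matched partners of vertices in $W$. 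This would contradict that $G$ is distance-hereditary.

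This chord analysis is where I expect the main obstacle. The delicate point is that the deleted edges are constrained, for example by lying between vertices of a common part that forms a clique or independent set in the relevant structure. I would first try to show that the deleted chords inside $W$ cannot cross one another. Then a shortest cycle argument, choosing $F$ with $|W|$ minimal, produces a gem or a domino in $G$ directly.
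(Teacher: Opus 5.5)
\textbf{There is a genuine gap: the intermediate claim your plan rests on is false.} You reduce antiholes to houses and then aim to show that no admissible subgraph of a distance-hereditary graph contains an induced house. Here is a counterexample.
\begin{itemize}
\item Take $G$ to be the $4$-cycle $(1,2,3,4)$ together with a vertex $5$ adjacent to $1,2,4$. Then $5$ is a true twin of $1$, so $G$ is distance-hereditary (equivalently $G=\overline{P_3\cup K_2}$, a cograph).
\item Take $t=2$ and $\mathbf{a}=(0,0,0,1,1)$. The only edge of $G$ with $a_i+a_j\ge 2$ is $\{4,5\}$, so $H=G-\{4,5\}$.
\item This $H$ is exactly a house: the square $1,2,3,4$ with roof $5$ over $\{1,2\}$.
\end{itemize}
This is consistent with the theorem, since a house is weakly chordal. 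But it shows your chord analysis cannot succeed: putting the deleted chord $\{4,5\}$ back into the house produces no hole, house, domino or gem. So the antiholes $\overline{C}_m$ with $m\ge 6$ must be handled as antiholes, not through the houses they contain.

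Two further problems:
\begin{itemize}
\item The structure you attribute to admissible subgraphs (deleting edges inside parts of an ordered matching) is not what the definition gives. The only constraint is the threshold rule $E(H)=\{\{i,j\}\in E(G)\mid a_i+a_j<t\}$.
\item The main step of your argument is left as an unexecuted case analysis.
\end{itemize}
Your reduction of $\overline{C}_5$ to $C_5$ and the final appeal to Woodroofe are fine.

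\textbf{The missing idea is the one the paper uses: the pendant/twin structure combined with the monotonicity of the threshold rule.}
\begin{enumerate}
\item Restricting the certificate to $U$ shows that $H[U]$ is admissible in $G[U]$, which is again distance-hereditary. So you may assume $H$ is a spanning subgraph of $G$ isomorphic to $C_m$ or $\overline{C}_m$ with $m\ge 5$.
\item Then $G$ has minimum degree at least $2$, so it has no pendant vertex. By Bandelt--Mulder it therefore has a pair of twins $x,y$.
\item If $a(x)\le a(y)$, every kept edge $\{y,z\}$ with $z\neq x$ forces the edge $\{x,z\}$ to be kept. Hence $N_H(y)\setminus\{x\}\subseteq N_H(x)\setminus\{y\}$.
\item Because $C_m$ and $\overline{C}_m$ are regular, this inclusion is an equality. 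So $x,y$ are twins in $H$, which is impossible for $m\ge 5$.
\end{enumerate}
Regularity is exactly what fails for the house. In the example above, $a(1)\le a(5)$ and $N_H(5)\setminus\{1\}=\{2\}\subsetneq\{2,4\}=N_H(1)\setminus\{5\}$. This is why houses can and do occur as admissible subgraphs.
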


When $G$ is restricted to chordal graphs, we obtain a complete classification showing that $\mathcal{P}_\k$ coincides with the class of distance-hereditary graphs. Recall that a \emph{gem} is the graph formed by adding a universal vertex to a path of length $3$ (or equivalently, a $5$-cycle with two non-crossing chords). A graph is \emph{gem-free} if it contains no gem as an induced subgraph.

\begin{thm}\label{thm_chordal}
Let $G$ be a chordal graph. Then the following conditions are equivalent:
\begin{enumerate}
    \item $G \in \mathcal{P}_\k$;
     \item $G$ is gem-free.
    \item $G$ is distance-hereditary;
   \end{enumerate}
\end{thm}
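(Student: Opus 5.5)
We prove the cycle of implications $(3)\Rightarrow(1)\Rightarrow(2)\Rightarrow(3)$.

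The implication $(3)\Rightarrow(1)$ is Theorem~\ref{thm_dist}. It shows that every admissible subgraph of a distance-hereditary graph is weakly chordal. For weakly chordal graphs $H$ the equality $\operatorname{reg}_\k(I(H)) = 1+\nu(H)$ holds over every field, by the known result of Woodroofe.

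The implication $(2)\Rightarrow(3)$ is purely graph-theoretic. By the Bandelt--Mulder characterization, a graph is distance-hereditary if and only if it has no induced house, hole (cycle of length at least $5$), domino, or gem. A chordal graph has no induced cycle of length at least $4$. The house and the domino each contain an induced $C_4$, so a chordal graph contains neither. Hence a chordal gem-free graph avoids all four forbidden configurations and is distance-hereditary.

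The substantive implication is $(1)\Rightarrow(2)$, which we prove by contrapositive. Suppose the chordal graph $G$ contains an induced gem $W$. By Theorem~\ref{thm_hered}, $\mathcal{P}_\k$ is hereditary, so it suffices to show that the gem itself is not in $\mathcal{P}_\k$. That is, we must exhibit an admissible subgraph $H$ of the gem with $\operatorname{reg}_\k(I(H)) > 1+\nu(H)$.

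The natural candidate uses the labeling of the gem as the path $1\!-\!2\!-\!3\!-\!4$ plus a universal vertex $5$. Deleting the path edge $2\!-\!3$ leaves the subgraph with edges $12$, $34$, $15$, $25$, $35$, $45$. This is two triangles $\{1,2,5\}$ and $\{3,4,5\}$ glued at vertex $5$, which is chordal, so it will not work. The better candidate removes edges so as to leave the $5$-cycle $1\!-\!2\!-\!3\!-\!4\!-\!5\!-\!1$, discarding the chords $25$ and $35$. For $C_5$ we have $\nu(C_5)=1$ and $\operatorname{reg}(I(C_5))=3$, so $C_5$ violates the equality over every field.

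The main obstacle is therefore to check, from the definition of admissible subgraph in \cite{HTV}, that this $C_5$, or some other non-weakly-chordal spanning subgraph of the gem, is admissible. Since the definition is not reproduced above, I would verify it directly from that definition. If it is unavailable, I would enumerate the admissible subgraphs of the gem and compute their regularities, for instance by splitting $I(H)$ at a vertex via the exact sequences $\reg I(H)\le\max\{\reg I(H\setminus x),\reg I(H\setminus N[x])+1\}$ together with Katzman-type lower bounds, until one with regularity exceeding $1+\nu(H)$ is found.

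As a cross-check, Theorem~\ref{thm_hered} together with the failure of the conjecture of \cite{HTV} stated in the introduction suggests that the gem itself should satisfy $\operatorname{depth}(S/J^{(t)}) < 4-\alpha_t$ for some $t$. This inequality would also give $(1)\Rightarrow(2)$ directly, through Theorem~\ref{thm_hered} and heredity, by a Macaulay-style computation of $\operatorname{depth} S/J(\text{gem})^{(t)}$ for small $t$ via degree-complex arguments.
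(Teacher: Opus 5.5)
Your outline follows the paper's. You get (3)$\Rightarrow$(1) from Theorem~\ref{thm_dist} and Woodroofe. You get (2)$\Rightarrow$(3) from Bandelt--Mulder, and your remark that the house and the domino contain an induced $C_4$ correctly unpacks that citation. You get (1)$\Rightarrow$(2) by showing the gem is not in $\mathcal{P}_\k$ and using heredity. The gap is that you never carry out the step you yourself call the main obstacle. You name the $5$-cycle $1\!-\!2\!-\!3\!-\!4\!-\!5\!-\!1$ as the candidate but give no certificate, so (1)$\Rightarrow$(2) is not proved.

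The definition is in Section~\ref{sec_pre}: $H$ is $t$-admissible if $E(H)=\{\{i,j\}\in E(G)\mid a_i+a_j<t\}$ for some $\a\in\mathbb{N}^5$. For your $C_5$ you need $a_2+a_5\ge t$ and $a_3+a_5\ge t$, while the other five edge sums are at most $t-1$. This is impossible for $t=2$: $a_4+a_5\le 1$ gives $a_5\le 1$, hence $a_2,a_3\ge 1$, contradicting $a_2+a_3\le 1$. So you must pass to $t=3$, where the constraints force $\a=(0,1,1,0,2)$, exactly the paper's certificate. With it, $\operatorname{reg}(I(C_5))=3>2=1+\nu(C_5)$, and the implication follows.

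Your fallback through depth cannot work, because the ``cross-check'' is false. Writing $G$ for the gem, you have $\depth(S/J(G)^{(t)})=4-\alpha_t(G)$ for every $t\ge 1$:
\begin{itemize}
\item Every graph $H$ on five vertices satisfies $\operatorname{reg}(I(H))\le 1+(\text{matching number of }H)\le 3$.
\item For $t\ge 2$, the vector $(0,1,t-1,0,t)$ certifies an admissible subgraph with edge set $\{\{1,2\},\{3,4\}\}$. Hence $\alpha_t(G)=2$, and Theorem~\ref{thm:admissible} gives depth $5-3=2=4-\alpha_t(G)$.
\item For $t=1$, the admissible subgraphs are induced subgraphs of the chordal gem, and both sides equal $3$.
\end{itemize}
So the gem satisfies the depth equality for all $t$ yet is not in $\mathcal{P}_\k$. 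The paper's counterexample to the conjecture of \cite{HTV} is the sun $S_4$, not the gem. Only the admissible-subgraph argument proves (1)$\Rightarrow$(2).
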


Finally, we propose a list of forbidden induced subgraphs for a weakly chordal graph to belong to $\mathcal{P}_\k$. The remainder of this paper is organized as follows: in Section \ref{sec_pre}, we recall the necessary preliminaries and notation, and in Section \ref{sec_proofs}, we present the proofs of our main results.

\section{Depth, regularity, and admissible subgraphs}\label{sec_pre}

Throughout this paper, let $\k$ be a field, $S = \k[x_1, \ldots, x_n]$ a polynomial ring, and $\mathfrak{m} = (x_1, \ldots, x_n)$ the \emph{maximal homogeneous ideal} of $S$.

\subsection{Depth and Castelnuovo--Mumford regularity}

Let $M$ be a nonzero finitely generated graded $S$-module. For each integer $i \ge 0$, let $H_{\mathfrak{m}}^i(M)$ denote the $i$-th local cohomology module of $M$ with support in $\mathfrak{m}$. The depth and the Castelnuovo--Mumford regularity of $M$ are defined by
\[
\depth(M) = \min \{ \, i \mid H_{\mathfrak{m}}^i(M) \neq 0 \, \},
\]
and
\[
\reg(M) = \max \{ \, j + i \mid H_{\mathfrak{m}}^i(M)_j \neq 0, \ 0 \le i \le \dim(M), \ j \in \mathbb{Z} \, \}.
\]

\subsection{Graphs, edge ideals, and cover ideals}

Let $G$ be a simple graph with vertex set $V(G) = \{1, \ldots, n\}$ and edge set $E(G)$. Throughout, we assume that $G$ has no isolated vertices. We recall basic graph-theoretic concepts and notation; for details on ordered matchings and ordered alternating paths, we refer the reader to \cite{CV, BHHT, HTV}.

\begin{defn}\label{definition1}
Let $G$ be a simple graph with vertex set $V(G) = \{1, \ldots, n\}$ and edge set $E(G)$.

\begin{enumerate}
    \item For a vertex $v \in V(G)$, the \emph{neighborhood} of $v$ in $G$, denoted by $N_G(v)$, is the set of vertices adjacent to $v$, i.e., $N_G(v) = \{ u \in V(G) \mid \{u, v\} \in E(G) \}$. The \emph{closed neighborhood} of $v$ is $N_G[v] = N_G(v) \cup \{v\}$. The \emph{degree} of $v$, denoted by $\deg_G(v)$, is the cardinality $|N_G(v)|$.
    
    \item For two vertices $u, v \in V(G)$, the \emph{distance} between $u$ and $v$ in $G$, denoted by $\operatorname{dist}_G(u, v)$, is the length of a shortest path connecting $u$ and $v$ in $G$. If no such path exists, we set $\operatorname{dist}_G(u, v) = \infty$.

    \item A simple graph $H$ is a \emph{subgraph} of $G$ if $V(H) \subseteq V(G)$ and $E(H) \subseteq E(G)$. It is an \emph{induced subgraph} of $G$ if $E(H) = E(G) \cap \big( V(H) \times V(H) \big)$.
    
    \item For a subset $U \subseteq V(G)$, we denote by $G[U]$ and $G \setminus U$ the induced subgraphs of $G$ on $U$ and on $V(G) \setminus U$, respectively.
    
    \item A path $P_n$ on $n$ vertices is the graph with vertex set $V(P_n) = \{1, \ldots, n\}$ and edge set
    \[
    E(P_n) = \{ \{1,2\}, \ldots, \{n-1,n\} \}.
    \]
    
    \item A cycle $C_n$ on $n$ vertices is the graph with vertex set $V(C_n) = \{1, \ldots, n\}$ and edge set
    \[
    E(C_n) = E(P_n) \cup \{ \{1,n\} \}.
    \]
    
    \item A graph is \emph{connected} if every pair of vertices can be joined by a path. A \emph{forest} is an acyclic graph, and a \emph{tree} is a connected forest.

   \item A cycle $C_n$ with $n \ge 4$ is called a \emph{hole}, and its complement $\bar{C}_n$ is called an \emph{antihole}. A graph is \emph{chordal} if it contains no induced hole, and \emph{weakly chordal} if it contains no induced hole $C_n$ and no induced antihole $\bar{C}_n$ for $n \ge 5$.

    \item A \emph{house} is a graph on vertex set $\{1, 2, 3, 4, 5\}$ consisting of a $4$-cycle $(1, 2, 3, 4)$ together with a fifth vertex $5$ adjacent precisely to $1$ and $4$.

    \item A \emph{gem} is a graph on vertex set $\{1, 2, 3, 4, 5\}$ consisting of an induced path $(1, 2, 3, 4)$ together with a fifth vertex $5$ adjacent to all four vertices of the path.

    \item A subset $M \subseteq E(G)$ is called a \emph{matching} of $G$ if no two edges in $M$ share a common vertex. It is an \emph{induced matching} if the subgraph induced by the vertices incident to $M$ has edge set exactly $M$. The \emph{induced matching number} of $G$, denoted by $\nu(G)$, is the maximum size of an induced matching in $G$.    
\end{enumerate}
\end{defn}

\begin{defn}
Let $G$ be a graph with vertex set $V(G) = \{1, \ldots, n\}$ and edge set $E(G)$. The \emph{edge ideal} and \emph{cover ideal} of $G$, denoted by $I(G)$ and $J(G)$, respectively, are defined by
$$I(G) = (x_ix_j \mid \{i,j\} \in E(G)),
\quad \text{and} \quad
J(G) = \bigcap_{\{i,j\} \in E(G)} (x_i,x_j).$$
\end{defn}

The $t$-th symbolic power of $J(G)$ is given by
$$J(G)^{(t)}=\bigcap_{ \{i,j\} \in E(G)}(x_i, x_j)^t.$$

\subsection{Admissible subgraphs}
\begin{defn}
Let $G$ be a graph with vertex set $V(G)=[n]$. A subgraph $H$ of $G$ is called a \emph{$t$-admissible subgraph} of $G$ if there exists an exponent vector $\mathbf{a}=(a_1,\ldots,a_n)\in\mathbb{N}^n$ such that
\[
E(H)=\bigl\{\{i,j\}\in E(G)\mid a_i+a_j<t\bigr\}.
\]
The set of all $t$-admissible subgraphs of $G$ is denoted by $\operatorname{Adm}_t(G)$. Such an exponent vector $\mathbf{a}$ is called a \emph{certificate of admissibility} for $H$. The value of $\mathbf{a}$ at a vertex $v\in V(G)$ is also denoted by $a(v)$.
\end{defn}

The following result, due to Dung, Hang, and Vu \cite[Lemma~2.11]{DHV}, serves as a key tool in our approach.

\begin{thm}\label{thm:admissible}
Let $G$ be a simple graph. Then for all $t \ge 1$,
\[
\depth\!\left(S/J(G)^{(t)}\right)
=
n - \max \left\{ \operatorname{reg}(I(H)) \;\middle|\; H \in \operatorname{Adm}_t(G) \right\}.
\]
\end{thm}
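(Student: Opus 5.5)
The plan is to reduce the depth of $S/J(G)^{(t)}$ to depths of squarefree monomial ideals via colon ideals, identify those squarefree ideals as cover ideals of admissible subgraphs, and then convert depth into regularity by Alexander duality.

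\textbf{Step 1: reduction to radicals of colons.} I would invoke the standard degree-complex (Takayama-type) formula for monomial ideals. For a monomial ideal $I \subseteq S$,
\[
\depth(S/I) = \min\left\{ \depth\bigl(S/\sqrt{I : x^{\mathbf a}}\bigr) \;\middle|\; \mathbf a \in \mathbb N^n,\ x^{\mathbf a}\notin I \right\}.
\]
The inequality ``$\le$'' holds because $\depth S/I \le \depth S/\sqrt{I:x^{\mathbf a}}$ for every such $\mathbf a$. Equality is attained because every nonvanishing graded piece $H^i_{\mathfrak m}(S/I)_{\mathbf a}$ is computed by the degree complex $\Delta_{\mathbf a}(I)$. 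After shifting negative coordinates, this complex coincides with the Stanley--Reisner complex of a link in $\sqrt{I:x^{\mathbf a^+}}$.

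\textbf{Step 2: computing the colons.} For $I = J(G)^{(t)} = \bigcap_{\{i,j\}\in E(G)} (x_i,x_j)^t$, colon commutes with intersection. Moreover $(x_i,x_j)^t : x^{\mathbf a}$ equals $(x_i,x_j)^{t-a_i-a_j}$ if $a_i+a_j<t$, and equals $S$ otherwise. Taking radicals gives
\[
\sqrt{J(G)^{(t)} : x^{\mathbf a}} = \bigcap_{\{i,j\}\in E(H)} (x_i,x_j) = J(H),
\qquad E(H) = \{\{i,j\}\in E(G) \mid a_i+a_j<t\}.
\]
Thus $H$ is exactly the $t$-admissible subgraph with certificate $\mathbf a$. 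The condition $x^{\mathbf a}\notin J(G)^{(t)}$ is equivalent to $E(H)\ne\emptyset$. Taking $\mathbf a = 0$ shows that $H = G$ occurs.

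\textbf{Step 3: from depth to regularity.} The ideal $J(H)$ is the Alexander dual of $I(H)$, both viewed in the full ring $S$; the variables not touched by $E(H)$ simply do not appear. Terai's theorem gives $\operatorname{pd}(S/J(H)) = \reg(I(H))$. Auslander--Buchsbaum then yields
\[
\depth(S/J(H)) = n - \reg(I(H)).
\]
Combining this with Steps 1 and 2 and minimizing over $\mathbf a$ gives the stated formula, with the maximum taken over admissible $H$ having at least one edge.

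\textbf{Main obstacle.} The delicate point is Step 1, namely justifying that the minimum is attained by a nonnegative $\mathbf a$. Local cohomology may live in degrees with negative coordinates. I would handle this by the standard observation that $\Delta_{\mathbf a}(I)$ equals the link of $G_{\mathbf a}=\{i : a_i<0\}$ in $\Delta_{\mathbf a^+}(I)$, together with the fact that links of Cohen--Macaulay-type depth bounds do not decrease depth. Alternatively, I would cite this step directly from the literature, as \cite{DHV} presumably does.
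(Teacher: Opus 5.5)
The paper gives no proof of this statement; it quotes it from \cite{DHV} (Lemma 2.11). Your proposal therefore supplies an argument the paper takes for granted, and it is correct. The colon computation $(x_i,x_j)^t : x^{\mathbf a} = (x_i,x_j)^{\max(t-a_i-a_j,0)}$ is right, as is the identification $\sqrt{J(G)^{(t)}:x^{\mathbf a}} = J(H) = I(H)^\vee$ with $H$ the $t$-admissible subgraph certified by $\mathbf a$. The passage to $n-\reg(I(H))$ via Terai's theorem and Auslander--Buchsbaum also holds.

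The only place to tighten is Step 1, where both inequalities are gestured at rather than argued.

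For ``$\le$'' you restate the claim rather than justify it. Cite the inequality $\depth S/I \le \depth S/(I:u)$ for a monomial $u\notin I$ (Rauf), together with the Herzog--Takayama--Terai inequality $\depth S/L \le \depth S/\sqrt{L}$ for monomial ideals $L$.

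For ``$\ge$'', no statement about depth of links is needed, and your phrasing of one is garbled. Let $\Gamma$ be the Stanley--Reisner complex of $\sqrt{I:x^{\mathbf a^+}}$. Then $\Delta_{\mathbf a}(I) = \operatorname{lk}_\Gamma G_{\mathbf a}$, so Takayama's and Hochster's formulas together give
\[
\dim_{\k} H^i_{\mathfrak m}(S/I)_{\mathbf a} = \dim_{\k} H^i_{\mathfrak m}\bigl(S/\sqrt{I:x^{\mathbf a^+}}\bigr)_{-\mathbf 1_{G_{\mathbf a}}},
\]
where $\mathbf 1_{G_{\mathbf a}}$ is the indicator vector of $G_{\mathbf a}$. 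Take $i = \depth S/I$ and $\mathbf a$ a degree in which this local cohomology module is nonzero. This gives $\depth S/\sqrt{I:x^{\mathbf a^+}} \le \depth S/I$ directly. It also gives $x^{\mathbf a^+}\notin I$, since otherwise $\Gamma$ is the void complex and both sides vanish.
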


This result also plays a key role in computing the depth of symbolic powers of cover ideals for paths and Ferrers graphs \cite{DHNT, HV}.

\section{Proofs of main results}\label{sec_proofs}

We begin with a useful technical lemma relating the invariant $\a_t(G)$ to the induced matching numbers of $t$-admissible subgraphs.

\begin{lem}\label{lem_alpha}
Let $G$ be a simple graph. Then for all $t \ge 1$,
\[
\alpha_t(G) = \max \{ \nu(H) \mid H \text{ is a $t$-admissible subgraph of } G \}.
\]
\end{lem}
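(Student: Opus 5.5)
The lemma amounts to two inequalities. I will use the convention of \cite{CV, HTV}: an ordered matching is $M=\{\{u_1,v_1\},\dots,\{u_r,v_r\}\}$ such that $\{u_1,\dots,u_r\}$ is independent in $G$, and $\{u_i,v_j\}\in E(G)$ implies $i\le j$. I will also use the convention for alternating paths from \cite{HTV}. First I show that every induced matching of a $t$-admissible subgraph produces an ordered matching of $G$ with $\ell\le 2t-1$. Then I show the converse by building a certificate $\a$ from an ordered matching.

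\emph{The inequality $\max\nu(H)\le\alpha_t(G)$.} Let $H\in\operatorname{Adm}_t(G)$ have certificate $\a$, and let $N$ be an induced matching of $H$. Since $N$ is induced in $H$, every edge of $G$ joining vertices of two different edges of $N$ is not in $H$, so its $\a$-sum is at least $t$. Every edge of $N$ has $\a$-sum at most $t-1$. For each edge of $N$, let $u$ be the endpoint with the smaller $a$-value and $v$ the other, so $a(u)<t/2$. Two such $u$'s have $\a$-sum less than $t$, hence are nonadjacent in $G$, so $\{u_i\}$ is independent. Order the edges so that $a(u_1)\ge a(u_2)\ge\cdots$, breaking ties arbitrarily. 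If $\{u_i,v_j\}\in E(G)$ with $i\neq j$, then $a(u_i)+a(v_j)\ge t>a(u_j)+a(v_j)$. Hence $a(u_i)>a(u_j)$, which forces $i<j$, and $N$ is an ordered matching.

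For the bound on $\ell(N)$, take an alternating path $x_0x_1x_2\cdots$ starting with an edge of $N$. Its vertices, up to the last edge of $N$ it uses, lie in $V(N)$. The constraints $a(x_{2k})+a(x_{2k+1})\le t-1$ and $a(x_{2k+1})+a(x_{2k+2})\ge t$ give $a(x_{2k+2})\ge a(x_{2k})+1$. Hence $a(x_{2k})\ge k$, and combined with $a(x_{2k+1})\ge 0$ this yields $k\le t-1$, i.e.\ length at most $2t-1$. I still have to check the convention in \cite{HTV} for a path whose final edge is a non-matching edge leaving $V(N)$. I expect it either excludes such paths or that they are already controlled by the same estimate.

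\emph{The inequality $\alpha_t(G)\le\max\nu(H)$.} Given an ordered matching $M$ with $\ell(M)\le 2t-1$, I want a certificate $\a$ with three properties: every vertex outside $V(M)$ gets value $t$, so it lies on no edge of $H$; every edge of $M$ has $\a$-sum at most $t-1$; and every other edge of $G$ inside $V(M)$ has $\a$-sum at least $t$. Then $M$ is an induced matching of $H$.

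\emph{Construction of $\a$ (expected main obstacle).} For each $i$, let $h_i$ be the maximal number of $M$-edges on an $M$-alternating path that ends by traversing $\{u_i,v_i\}$ from $u_i$ to $v_i$. The hypothesis $\ell(M)\le 2t-1$ gives $1\le h_i\le t$. I will set $a(v_i)=h_i-1$ and $a(u_i)=t-h_i$, so each edge of $M$ has sum exactly $t-1$. The remaining edges must then have sum at least $t$; these are of the types $\{u_i,v_j\}$ with $i<j$ and $\{v_i,v_j\}$, since the $u$'s are independent.

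For $\{u_i,v_j\}$ with $i\neq j$, extending a path through $u_i\to v_i$ is not directly what I need, so I may have to reverse the roles and define $h$ using paths that end at $v_j$ after entering $u_j$. The aim is the monotonicity $h_j\ge h_i+1$ whenever $u_i$ and $v_j$ are adjacent, and an analogous inequality for $v_iv_j$ edges. The ordering condition is what should exclude alternating cycles and make this "longest path" potential well defined. Getting the orientation right so that both edge types work simultaneously is the delicate point. If a single potential fails, I will fall back on an induction on $|M|$: remove $\{u_r,v_r\}$, which by the ordering is adjacent to no $u_i$ through $v_r$ with $i<r$, and adjust the values of the new edge.
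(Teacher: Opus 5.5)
Your first inequality is correct and self-contained. It reproves what the paper imports from \cite[Lemmas 2.8, 2.9]{HTV}, and your estimate $a(x_{2k+2})\ge a(x_{2k})+1$ does not even use that the vertices are distinct. Your hedge about a final non-matching edge should be resolved the other way, however: such paths are \emph{not} controlled by your estimate. For $G=P_3$, $t=1$, $\a=0$ and $N=\{\{1,2\}\}$, the path $1,2,3$ has length $2>2t-1$. So $\ell(M)$ must be read over alternating paths that begin and end with matching edges.

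The genuine gap is the converse. The paper simply takes it from \cite[Lemma 2.10]{HTV}; you leave it open, and the assignment you write down is oriented the wrong way.
\begin{itemize}
\item \textbf{Structure of the paths.} In an ordered matching, a vertex $u_k$ has non-matching neighbours in $V(M)$ only among the $v_m$ with $m>k$. Hence the alternating paths ending by traversing $u_i\to v_i$ are exactly the chains $u_{i_1}v_{i_1}u_{i_2}v_{i_2}\cdots u_iv_i$ with $i_1>i_2>\cdots>i$.
\item \textbf{Why your choice fails.} If $\{u_i,v_j\}\in E(G)$ with $i<j$, a chain ending at $v_j$ extends through $v_ju_i$ and $u_i\to v_i$, so $h_i\ge h_j+1$. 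This is the opposite of the inequality you need. With $a(v_i)=h_i-1$ and $a(u_i)=t-h_i$, this edge gets sum $t-1-h_i+h_j\le t-2$. This already happens for the path $v_1u_1v_2u_2$.
\item \textbf{The fix.} Swap the values: $a(u_i)=h_i-1$, $a(v_i)=t-h_i$, and $a=t$ off $V(M)$. This handles the edges $\{u_i,v_j\}$. A remaining edge $\{v_i,v_j\}$ then needs $h_i+h_j\le t$.
\item \textbf{What is still missing.} You would get $h_i+h_j\le t$ by gluing a longest chain ending at $v_i$, the edge $v_iv_j$, and a reversed longest chain ending at $v_j$. But the two chains may share a matching edge, so you only obtain an alternating \emph{walk}. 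The bound must therefore come from the precise convention for $\ell(M)$ in \cite{HTV}.
\end{itemize}

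With literal simple paths the converse is in fact false. Take the triangle $u_1v_1v_2$ with a pendant vertex $u_2$ at $v_2$, and $M=\{\{u_1,v_1\},\{u_2,v_2\}\}$. Then $M$ is ordered, and every simple alternating path has length at most $3$. Yet $M$ is not an induced matching of any $2$-admissible subgraph:
\begin{itemize}
\item $a(u_2)+a(v_2)\le 1$ forces $a(v_2)\le 1$;
\item the edges $\{u_1,v_2\}$ and $\{v_1,v_2\}$ must have sum at least $2$, so $a(u_1)\ge 1$ and $a(v_1)\ge 1$;
\item this contradicts $a(u_1)+a(v_1)\le 1$.
\end{itemize}
Your fallback induction on $|M|$ is not developed enough to assess, and it would run into the same point.
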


\begin{proof}
Let $H$ be a $t$-admissible subgraph of $G$, and let $M$ be an induced matching of $H$. By \cite[Lemma 2.8]{HTV}, $M$ is an ordered matching of $G$. Moreover, by \cite[Lemma 2.9]{HTV}, we have
\[
\ell(M) \le 2t - 1.
\]
It follows that $\nu(H) \le \alpha_t(G)$, which implies that
\[
\max \{ \nu(H) \mid H \text{ is a $t$-admissible subgraph of } G \} \le \alpha_t(G).
\]

Conversely, let $M$ be an ordered matching of $G$ such that $\ell(M) \le 2t - 1$. By \cite[Lemma 2.10]{HTV}, the graph $(V(G), M)$ is a $t$-admissible subgraph of $G$. Therefore,
\[
\alpha_t(G) \le \max \{ \nu(H) \mid H \text{ is a $t$-admissible subgraph of } G \}.
\]
Combining both inequalities completes the proof.
\end{proof}

We are now ready to prove Theorem \ref{thm_hered}.

\begin{proof}[Proof of Theorem \ref{thm_hered}]
Let $G$ be a graph in the class $\mathcal{P}_\k$, and let $F$ be an induced subgraph of $G$ on the vertex set $U \subseteq V(G)$. Suppose $H$ is a $t$-admissible subgraph of $F$ with a certificate $\a$. We extend $\a$ to a vector on $V(G)$ by setting $a(w) = t$ for all $w \in V(G) \setminus U$. This extended vector serves as a certificate establishing that $H$ is also a $t$-admissible subgraph of $G$. Consequently, $F$ belongs to $\mathcal{P}_\k$ by definition.

Next, by Theorem \ref{thm:admissible}, we have
\[
\depth\!\left(S/J(G)^{(t)}\right) = n - \max \{ \operatorname{reg}(I(H)) \mid H \in \operatorname{Adm}_t(G) \}.
\]
Since $G \in \mathcal{P}_\k$, it follows from the definition of $\mathcal{P}_\k$ that
\[
\depth\!\left(S/J(G)^{(t)}\right) = n - 1 - \max \{ \nu(H) \mid H \in \operatorname{Adm}_t(G) \}.
\]
The desired formula for depth then follows immediately from Lemma \ref{lem_alpha}.
\end{proof}

We immediately obtain the following characterization when $G$ is a unicyclic graph.

\begin{prop}
Let $G$ be a unicyclic graph containing a unique cycle $C_n$ with $n \ge 3$. Then $G \in \mathcal{P}_\k$ if and only if $n \not\equiv 2 \pmod 3$.
\end{prop}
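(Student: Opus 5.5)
We first identify the admissible subgraphs of a unicyclic graph $G$. Any subgraph of $G$ is either a forest or contains the unique cycle $C_n$ together with some forest attached to it. The key point is that every subgraph of $G$ that is obtained by deleting edges is admissible in a suitable sense. In particular, $G$ itself is $1$-admissible with certificate $\a = 0$. So whether $G \in \mathcal{P}_\k$ is determined by the regularity of $I(H)$ for $H$ ranging over subgraphs of $G$ that contain the cycle, or that are forests.

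We then use standard facts about regularity of edge ideals, which we would cite from the literature.
\begin{enumerate}
\item For forests (more generally, weakly chordal graphs), $\reg I(H) = 1+\nu(H)$ over any field.
\item For a unicyclic graph $H$ with cycle $C_m$, $\reg I(H) \in \{\nu(H)+1, \nu(H)+2\}$, with the larger value occurring exactly in a specific situation.
\end{enumerate}
For the cycle itself, $\reg I(C_m) = \nu(C_m)+2$ if $m \equiv 2 \pmod 3$ and $\nu(C_m)+1$ otherwise, since $\nu(C_m) = \lfloor m/3 \rfloor$ and $\reg I(C_m) = \lfloor (m+1)/3\rfloor + 1$.

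\textbf{Case $n \equiv 2 \pmod 3$.} The cycle $C_n$ is an admissible subgraph of $G$. To see this, give certificate $a(v)=0$ on the cycle and $a(w)=1$ elsewhere with $t=1$; this kills every edge touching the outside, so $H = C_n$ on its vertex set. Then $\reg I(H) = \nu(H)+2$, so $G \notin \mathcal{P}_\k$.

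\textbf{Case $n \not\equiv 2 \pmod 3$.} Let $H$ be an admissible subgraph. Its components are trees, plus possibly one unicyclic component containing all edges of $C_n$. Forest components satisfy the equality, and regularity and induced matching number are additive over components (after subtracting the $1$'s). So it suffices to treat a connected unicyclic $H$ whose cycle has length $n \not\equiv 2 \pmod 3$. Here we would invoke the known formula for unicyclic graphs, e.g.\ from Alilooee--Banerjee or Biyıkoğlu--Civan. That formula states that $\reg I(H) = \nu(H)+2$ can only happen when the cycle length is $\equiv 2 \pmod 3$ and, moreover, $\nu(H) = \nu(H\setminus \text{cycle-related set})$ under a certain condition. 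With $n \not\equiv 2$, this gives $\reg I(H) = \nu(H)+1$.

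The main obstacle is pinning down and correctly citing the unicyclic regularity formula, and verifying that its exceptional case requires $n \equiv 2 \pmod 3$. If no clean citation is available, we would instead prove $\reg I(H) \le \nu(H)+1$ directly. The plan is to induct using the exact sequence / splitting at a leaf: pick a leaf $x$ with neighbour $y$ and use
\[
\reg I(H) \le \max\{\reg I(H\setminus N[y]) + 1,\ \reg I(H\setminus y)\}.
\]
Each step reduces to either a forest or a smaller unicyclic graph with the same cycle. The base case is $C_n$ itself, which is handled by the cycle formula above.
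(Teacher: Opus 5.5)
Your proposal is correct and follows the paper's proof. For $n \equiv 2 \pmod 3$ you exhibit $C_n$ as an admissible subgraph (your $t=1$ certificate works as well as the paper's $t=2$ one). For $n \not\equiv 2 \pmod 3$ you note that every admissible subgraph has at most one cycle, namely $C_n$, and invoke the unicyclic regularity formula, which is due to Alilooee--Beyarslan--Selvaraja (the reference cited in the paper).

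Your opening remark that every edge-deleted subgraph is admissible is false as stated: summing the four defining inequalities shows that two opposite edges of $C_4$ never form an admissible subgraph. Nothing depends on it, since you prove the equality for all subgraphs of $G$. Your leaf-induction fallback, combined with Katzman's lower bound $\reg I(H) \ge \nu(H)+1$, is a valid self-contained alternative to the citation.
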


\begin{proof}
Suppose $n \equiv 2 \pmod 3$. Then $C_n$ is an induced subgraph of $G$, and hence it is a $2$-admissible subgraph of $G$. However, $\operatorname{reg}(I(C_n)) = 2 + \nu(C_n) \neq 1 + \nu(C_n)$, which implies $G \notin \mathcal{P}_\k$. 

Conversely, suppose $n \not\equiv 2 \pmod 3$. Then every $t$-admissible subgraph $H$ of $G$ contains at most one cycle, which must be $C_n$. In this case, it follows from \cite{ABS} that $\operatorname{reg}(I(H)) = 1 + \nu(H)$ for every admissible subgraph $H$. Thus, $G \in \mathcal{P}_\k$, completing the proof.
\end{proof}

\begin{proof}[Proof of Theorem \ref{thm_dist}]
Let $G$ be a distance-hereditary graph. Let $H$ be a $t$-admissible subgraph of $G$ with a certificate $\a$. Assume by contradiction that there exists a subset $U \subseteq V(H)$ such that $H[U]$ is isomorphic to $C_m$ or $\bar{C}_m$ for some $m \ge 5$.

Restricting the certificate $\a$ to $U$ implies that $H[U]$ is a $t$-admissible subgraph of $G[U]$. Since $G$ is distance-hereditary, $G[U]$ is also distance-hereditary. Thus, we may assume that $U = V(G)$. Recall that a pair of vertices $\{x,y\}$ in $G$ is called twins if $N_G(x) \setminus \{y\} = N_G(y) \setminus \{x\}$. By \cite[Theorem 1]{BM}, every distance-hereditary graph $G$ has a pair of twins or a leaf. Returning to our situation, $H$ is a subgraph of $G$ and is isomorphic to either $C_m$ or $\bar{C}_m$ for some $m \ge 5$. In particular, every vertex of $G$ has degree at least $2$, so $G$ has no leaves. Now, assume that $\{x,y\}$ is a pair of twins in $G$, and assume without loss of generality that $a(x) \le a(y)$. Then for any $z \in N_G(y) \setminus \{x\} = N_G(x) \setminus \{y\}$, the condition $a(y) + a(z) < t$ implies $a(x) + a(z) < t$. In particular, $N_H(y) \setminus \{x\} \subseteq N_H(x) \setminus \{y\}$. Thus, if $\deg_H(x) = \deg_H(y)$, then $N_H(x) \setminus \{y\} = N_H(y) \setminus \{x\}$. Since $H$ is regular, this implies that $\{x,y\}$ are also twins in $H$. However, this is a contradiction because neither $C_m$ nor $\bar{C}_m$ contains twins for $m \ge 5$. Hence, $H$ is weakly chordal. The final statement follows from a result of Woodroofe \cite{W} and the definition of class $\mathcal{P}_k$.
\end{proof}

\begin{proof}[Proof of Theorem \ref{thm_chordal}]
(1) $\implies$ (2). Assume by contradiction that $G$ contains an induced gem. By restricting to the vertices of the gem, we may assume without loss of generality that $G$ itself is a gem on vertex set $\{1, 2, 3, 4, 5\}$, where $(1, 2, 3, 4)$ forms an induced path and $5$ is adjacent to all four path vertices. Define a vector $\a$ on $V(G)$ by setting $a(1) = a(4) = 0$, $a(5) = 2$, and $a(2) = a(3) = 1$. One can check that $\a$ serves as a certificate establishing that $C_5$ is a $3$-admissible subgraph of $G$. However, $\operatorname{reg}(I(C_5)) = 3 \neq 1 + \nu(C_5) = 2$, which contradicts the assumption that $G \in \mathcal{P}_\k$.

(2) $\implies$ (3) follows from the well-known characterization of distance-hereditary graphs due to Bandelt and Mulder \cite{BM}. 

(3) $\implies$ (1) follows directly from Theorem \ref{thm_dist}.
\end{proof}

We now provide a counterexample to \cite[Conjecture 2.13]{HTV} and turn our attention to the classification of weakly chordal graphs in $\mathcal{P}_\k$.

\begin{exam}
Let $G$ be the sun graph $S_4$ on eight vertices, consisting of a clique $C = \{c_1, c_2, c_3, c_4\}$ and four independent vertices $\{s_1, s_2, s_3, s_4\}$ with $N_G(s_i) = \{c_i, c_{i+1}\}$ for $i = 1, \ldots, 4$ (with indices taken modulo $4$). Define a vector $\a$ on $V(S_4)$ by $a(c_i) = 1$ and $a(s_i) = 0$ for $i = 1, \ldots, 4$. Then $\a$ serves as a certificate establishing that $C_8$ is a $2$-admissible subgraph of $S_4$. One can check that 
\[
\depth\!\left(S/J(S_4)^{(2)}\right) = 4 = 8 - \operatorname{reg}(I(C_8)).
\]
On the other hand, we claim that $\alpha_2(S_4) \le 2$. By Lemma \ref{lem_alpha}, it suffices to show that $\nu(F) \le 2$ for every $2$-admissible subgraph $F$ of $S_4$. 

Indeed, let $F$ be a $2$-admissible subgraph of $S_4$ with certificate $\a$, and let $M$ be an induced matching of $F$. Since every edge of $S_4$ contains at least one vertex from the clique $C$, each edge in $M$ can be written in the form $\{c, u\}$ for some $c \in C$. This implies that $a(c) < 2$. If $a(e) \ge 2$ for some clique vertex $e \in C$, then no edge incident to $e$ can belong to $F$, and the remaining subgraph of $S_4$ has an induced matching number of at most $1$. Thus, we may assume that $a(c) \le 1$ for all $c \in C$.

Now, if $M$ contains two distinct edges $\{c, u\}$ and $\{d, v\}$ with $c, d \in C$, then because $c$ and $d$ are adjacent in $S_4$, the edge $\{c, d\}$ cannot belong to $F$. This requires that $\alpha(c) + \alpha(d) \ge 2$, which implies $\alpha(c) = \alpha(d) = 1$. Consequently, for any induced matching of size at least $2$, all clique vertices involved must satisfy $\alpha(c) = 1$. It follows that $F$ is an induced subgraph of $C_8$, whence $\nu(F) \le 2$. Therefore, $\a_2(S_4) \le 2$, which shows that 
\[
\depth\!\left(S/J(S_4)^{(2)}\right) = 4 \neq 8 - 1 - \a_2(S_4) = 5.
\]
Thus, the equality fails for $S_4$ at $t = 2$, disproving the conjecture.
\end{exam}

We now provide the forbidden induced subgraphs for weakly chordal graphs to belong to $\mathcal{P}_\k$.

Let $F_1$ and $F_2$ be the graphs on vertex set $\{1, \ldots, 8\}$ defined as follows:
\begin{itemize}
    \item $F_1$ is the $8$-cycle $(1, 2, \ldots, 8)$ with chords $\{1, 4\}$ and $\{1, 6\}$.
    \item $F_2$ is the $8$-cycle $(1, 2, \ldots, 8)$ with chords $\{1, 4\}$, $\{1, 6\}$, and $\{3, 6\}$.
\end{itemize}

\begin{lem}
Let $G$ be a weakly chordal graph. If $G \in \mathcal{P}_\k$, then $G$ is $\{\text{house}, \text{gem}, F_1, F_2\}$-free.
\end{lem}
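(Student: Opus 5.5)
Since Theorem \ref{thm_hered} shows that $\mathcal{P}_\k$ is hereditary, it suffices to show that none of the four graphs house, gem, $F_1$, $F_2$ lies in $\mathcal{P}_\k$. For each one $X$ I will exhibit a $t$-admissible subgraph $H$ of $X$ with $\operatorname{reg}(I(H)) > 1 + \nu(H)$. The weak chordality hypothesis is not needed for this direction. The gem was already handled in the proof of Theorem \ref{thm_chordal}, where $C_5$ is shown to be $3$-admissible in the gem.

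For the house, with $4$-cycle $(1,2,3,4)$ and $5$ adjacent to $1$ and $4$, take $t=2$, $a(1)=a(4)=1$ and $a(2)=a(3)=a(5)=0$. Every edge except $\{1,4\}$ has weight sum at most $1<2$, and $a(1)+a(4)=2$. So $H$ is the $5$-cycle $(1,2,3,4,5)$, and $\operatorname{reg}(I(C_5))=3\neq 2=1+\nu(C_5)$.

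For $F_1$, take $t=3$, $a(1)=2$, $a(4)=a(6)=1$, and $a(v)=0$ otherwise. Every edge of the $8$-cycle has sum at most $2<3$; the only ones to check are $\{1,2\}$, $\{8,1\}$, $\{3,4\}$, $\{4,5\}$, $\{5,6\}$, $\{6,7\}$. Both chords $\{1,4\}$ and $\{1,6\}$ have sum $3$. Hence $C_8\in\operatorname{Adm}_3(F_1)$. Since $8\equiv 2 \pmod 3$, the regularity formula for cycles used in the proof of the unicyclic proposition gives $\operatorname{reg}(I(C_8))=2+\nu(C_8)=4\neq 3$.

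For $F_2$, the whole $8$-cycle cannot be cut out. Killing $\{3,6\}$ would force $a(3)+a(6)\ge t$ together with $a(1)+a(4)\ge t$ and $a(1)+a(6)\ge t$. Combined with the cycle constraints at $\{1,2\}$, $\{2,3\}$ and $\{3,4\}$, this leads to a contradiction, which I will verify. I will therefore reuse the same certificate as for $F_1$. Since $a(3)+a(6)=1<3$, the resulting $H$ is $C_8$ together with the chord $\{3,6\}$, that is, a $4$-cycle $(3,4,5,6)$ and a $6$-cycle $(1,2,3,6,7,8)$ glued along the edge $\{3,6\}$.

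I would then check that $\nu(H)=2$: a third edge always conflicts because of the edges $\{8,1\}$ and $\{3,6\}$, which is a short case check. The remaining task is to show $\operatorname{reg}(I(H))=4$. I plan to do this by splitting at the edge $\{3,6\}$, using the standard short exact sequences for $I(H)=I(H\setminus\{3,6\}) + (x_3x_6)$ or for deleting a vertex. Alternatively, I would compute the relevant homology of the independence complex directly, or simply state a computer verification.

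This last regularity computation for $F_2$ is the step I expect to be the main obstacle. If it fails, so that $\operatorname{reg}(I(H))=3$, I would search for another certificate with larger $t$. The aim would be a different admissible subgraph that is an $8$-cycle, a $5$-cycle, or another graph whose regularity exceeds the induced matching bound.
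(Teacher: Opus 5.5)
Your arguments for the house, the gem and $F_1$ are correct. For $F_1$ you use $t=3$, whereas the paper uses $t=2$ with $\mathbf{a}=(1,0,0,1,0,1,0,0)$; both work. The gap is the $F_2$ case, which rests on a false claim: the whole $8$-cycle \emph{can} be cut out of $F_2$. The inequalities $a(1)+a(4)\ge t$, $a(1)+a(6)\ge t$, $a(3)+a(6)\ge t$ and $a(i)+a(i+1)<t$ along the cycle have a common solution. One example is $t=3$, $\mathbf{a}=(2,0,1,1,0,2,0,0)$; the paper's choice is $t=5$, $\mathbf{a}=(3,0,2,2,0,3,0,0)$. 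With either certificate $C_8\in\operatorname{Adm}_t(F_2)$, and you finish exactly as for $F_1$, since $\operatorname{reg}(I(C_8))=4>3=1+\nu(C_8)$.

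This is not cosmetic, because your fallback does fail. Let $H$ be $C_8$ together with the chord $\{3,6\}$. Then $\operatorname{reg}(I(H))=3=1+\nu(H)$, for the following reasons.
\begin{itemize}
\item Since $N_H(4)\subseteq N_H(6)$, the fold lemma gives $\operatorname{Ind}(H)\simeq\operatorname{Ind}(H\setminus 6)=\operatorname{Ind}(P_7)$, which is contractible.
\item Each seven-vertex induced subgraph $H\setminus v$ has an independence complex that is either contractible (for $v\in\{1,3,6,8\}$) or homotopy equivalent to $S^1$ (for $v\in\{2,4,5,7\}$).
\item Smaller induced subgraphs cannot contribute: $\beta_{i,j}(S/I(H))=0$ for $j>2i$, and $\beta_{3,6}(S/I(H))$ counts induced copies of $3K_2$, of which $H$ has none.
\end{itemize}
Hence Hochster's formula gives $\operatorname{reg}(S/I(H))=2$. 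The obstacle you flagged is real, so the proposal as written does not show $F_2\notin\mathcal{P}_\k$. What is missing is an explicit certificate realizing $C_8$ as an admissible subgraph of $F_2$, such as either one above.
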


\begin{proof}
Since the class $\mathcal{P}_\k$ is hereditary by Theorem \ref{thm_hered}, it suffices to show that none of the graphs in $\{\text{house}, \text{gem}, F_1, F_2\}$ belongs to $\mathcal{P}_\k$. We construct a certificate $\a$ for each graph as follows:

\begin{enumerate}
    \item \textbf{House:} Let the house be labeled as the $5$-cycle $(1, 2, 3, 4, 5)$ with chord $\{1, 3\}$. Define $\a = (1, 0, 1, 0, 0)$. Then $\a$ serves as a certificate establishing that $C_5$ is a $2$-admissible subgraph of the house.
    
    \item \textbf{Gem:} Let the gem be labeled as the path $(1, 2, 3, 4)$ with universal vertex $5$. Define $\a = (0, 1, 1, 0, 2)$. Then $\a$ serves as a certificate establishing that $C_5$ is a $3$-admissible subgraph of the gem.
    
    \item \textbf{$F_1$:} Define $\a = (1, 0, 0, 1, 0, 1, 0, 0)$. Then $\a$ serves as a certificate establishing that $C_8$ is a $2$-admissible subgraph of $F_1$.
    
    \item \textbf{$F_2$:} Define $\a = (3, 0, 2, 2, 0, 3, 0, 0)$. Then $\a$ serves as a certificate establishing that $C_8$ is a $5$-admissible subgraph of $F_2$.
\end{enumerate}
In each case, the given graph contains an admissible subgraph $H$ isomorphic to either $C_5$ or $C_8$. Because $\operatorname{reg}(I(H)) > 1 + \nu(H)$ for $H \in \{C_5, C_8\}$, none of these four graphs belongs to $\mathcal{P}_\k$, completing the proof.
\end{proof}

We conjecture that these four graphs represent the complete list of forbidden induced subgraphs for weakly chordal graphs to belong to $\mathcal{P}_\k$.

\begin{conj}
Let $G$ be a weakly chordal graph. Then $G \in \mathcal{P}_\k$ if and only if $G$ is $\{\text{house}, \text{gem}, F_1, F_2\}$-free.
\end{conj}

\subsection*{Acknowledgments}
This research was funded by the fundamental research program of Hung Vuong University under the project grant ĐT-HV29.2026.

\end{document}